
\documentclass[letterpaper, twocolumn, 10 pt, conference]{ieeeconf}  

\IEEEoverridecommandlockouts                              




\usepackage{graphics} 
\usepackage{epsfig} 
\usepackage{mathptmx} 
\usepackage{times} 
\usepackage{amsmath} 
\usepackage{amssymb}  
\usepackage{graphicx}
\usepackage{relsize}
\DeclareMathAlphabet{\mathcal}{OMS}{cmsy}{m}{n}
\SetMathAlphabet{\mathcal}{bold}{OMS}{cmsy}{b}{n}

\title{\LARGE \bf
Grid-Forming ($\lambda-\omega$) Virtual Oscillator Control in Converter-Based Power Systems
}

\author{Taouba Jouini$^{1}$, Emma Tegling$^{1}$ and Zhiyong Sun$^{2}$    
\thanks{*This work has received funding from the European Research Council (ERC) under the European Union's Horizon 2020 research  and innovation program (grant agreement no: 834142) and from the Swedish Research Council under grant 2019-00691.}%
\thanks{ $^{1}$ T. Jouini and E. Tegling are with the Department of Automatic Control, Lund University, Sweden. $^{2}$ Z. Sun is with Department of Electrical Engineering,  Eindhoven University of Technology, the Netherlands.
        E-mails: \tt\small \{taouba.jouini, emma.tegling\}@control.lth.se, z.sun@tue.nl.}}%

\graphicspath{{./fig/}}
\usepackage[usenames,dvipsnames]{color}
\usepackage{epstopdf}

\usepackage{amsmath,amsthm,amssymb,mathrsfs,dsfont}
\usepackage{amsfonts}
\usepackage{siunitx}
\usepackage[colorinlistoftodos,prependcaption,textsize=tiny]{todonotes}
\usepackage{tikz}
\usetikzlibrary{matrix}

\usepackage{cite}

\newcommand\oprocendsymbol{\hbox{$\blacksquare$}}

\newcommand\oprocend{\relax\ifmmode\else\unskip\hfill\fi\oprocendsymbol}

\newcommand{\real}[0]{\mathbb R}

\providecommand{\norm}[1]{\lVert#1\rVert}

\newtheorem{theorem}{Theorem}[section]
\newtheorem{lemma}[theorem]{Lemma}

\newtheorem{proposition}[theorem]{Proposition}

\newtheorem{remark}[]{Remark}
\newtheorem{assumption}[]{Assumption}

\newtheorem{condition}[theorem]{Condition}

\usepackage{verbatim}





\usepackage[normalem]{ulem}
\newcommand\rout{\bgroup\markoverwith{\textcolor{red}{/}}\ULon} 

\usepackage[prependcaption,colorinlistoftodos]{todonotes}







\begin{document}

\maketitle
\thispagestyle{empty}
\pagestyle{empty}

\begin{abstract}
Inspired by the kinetics of wave phenomena in reaction-diffusion models of biological systems, we propose a novel grid-forming control strategy for control of three-phase DC/AC converters in power systems. The ($\lambda-\omega$) virtual oscillator control or ($\lambda-\omega$) VOC is a natural increment on ideas from virtual oscillator dynamics rotating at a fixed nominal frequency to adaptive, angle-based frequency function. We study a network of identical three-phase DC/AC converters interconnected via $\Pi$ transmission lines. For this, we prove almost global asymptotic stability for a reduced (time-scale separated) version of the model, associated to a well-defined set of controller gains and system parameters. Additionally, we link the ($\lambda-\omega$) VOC to well-studied controllers in the literature, e.g. droop control. Finally, we validate our results on an example three DC/AC converter network.
\end{abstract}


\section{INTRODUCTION}
Power generation has always been a major source of air pollution and much effort has been devoted to developing cleaner generation technologies. However, the relatively recent concerns about global warming and sustainability have started to change the way power systems operate and expand and motivated a major shift to renewables with converter-interfaced generation \cite{machowski1997power}. In the presence of large amounts of converter-interfaced generation, the modeling and design of controllers in power systems needs to be re-thought from new perspectives \cite{paolone2020fundamentals}.
\paragraph{Literature} In essence, the control design for power converters has been driven by two directions of research. On the one hand, the design of oscillators on the circle (or torus), where AC signals are typically assumed to be in quasi-stationary steady state and hence, can be described by their phase angles. The change of the frequency is dictated by the power fluctuations in the grid. This gives rise to droop control \cite{SIMPSONPORCO20132603} and variations thereof \cite{bevrani2014virtual}, which are designed to emulate the dynamics of synchronous machines.
Another controller that well translates the advantageous properties of synchronous machines is the matching controller \cite{arghir2018grid} that makes use of available measurements of the voltage deviations of the DC capacitor to {\em define} the frequency for the electrical grid. 

On the other hand, converter control design based on oscillators in rectangular coordinates has also gained attention and is founded on the developed theory of nonlinear oscillations~\cite{fradkov1998introduction}; The Van der Pol oscillator, or more generally Liénard oscillator, gives rise to virtual oscillator control (VOC) in \cite{J14}, Hopf-like bifurcations inspire dispatchable virtual oscillator (d-VOC) in \cite{colombino2019global} and alike \cite{li2020advanced}. Once expressed in polar coordinates after proper time-averaging, virtual oscillator control possesses droop properties that guarantee its compatibility with oscillators on the circle \cite{sinha2015virtual,colombino2019global} and synchronous machines.

\paragraph{Contributions}
 Our contributions in this work are as follows: first, we formulate a control design problem for identical three-phase DC/AC converters whose dynamics are represented by a controllable current source and interconnected via dynamical transmission lines. Motivated by ($\lambda-\omega$) oscillations in biological systems that represent the kinetics of wave phenomena in reaction-diffusion models \cite{murray2007mathematical}, we propose a grid-forming ($\lambda-\omega$) virtual oscillator control ($\lambda-\omega$) VOC. In particular, given prescribed amplitude and phase angle setpoints, we provide more understanding for the use of phase angles in the control design of DC/AC converters and how it can influence transient stability in converter-based power systems. In {particular, we define the system frequency as the {\em angle difference} relative to desired angle set-point rotating at the frequency of interest}. Second, we build upon previously studied virtual oscillator control strategies. Instead of generating sinusoidal waves with a fixed frequency, we introduce an adaptive frequency function based on deviation of phase angles relative to their desired values. By deploying tools from singular perturbation theory, we provide a detailed stability analysis for a reduced model of the closed-loop dynamics. We prove almost global asymptotic stability of the desired steady state set for an explicitly characterized range of system parameters and control gains. Third, we study the properties of the proposed controller and link them to well-studied control strategies, e.g. droop control. Fourth, we corroborate our results through numerical simulations of identical three-phase DC/AC converters in closed-loop with ($\lambda-\omega$) VOC.
 
 Our paper unfurls as follows: Section \ref{sec: setup} presents the setup of the power system model and the overall control objectives. Section \ref{sec: control-design} introduces our grid-forming ($\lambda-\omega$) VOC and links it to other control design strategies. Section \ref{sec: stability} studies the global stability of the desired set of a reduced-order model and identifies parameters and control gains that guarantee stability.
Finally, Section \ref{sec: sims} exemplifies our theory via simulations of a three converter system.
\paragraph{Notation}
We define an undirected graph $G=(\mathcal{V}, \mathcal{E})$, where $\mathcal{V}$ is the set of nodes with $\vert\mathcal{V}\vert =n$ and $\mathcal{E}\subseteq \mathcal{V}\times \mathcal{V}$ is the set of  edges with $\vert \mathcal{E}\vert=m$. The network topology is specified by the incidence matrix~$\mathcal{B}\in\mathbb{R}^{n\times m}$. We denote the $2\times2$ identity matrix by $I_2=\left[\begin{smallmatrix}
1& 0\\ 0& 1
\end{smallmatrix}\right]$ and the identity matrix of appropriate dimensions $p\in\mathbb{N}$ by $\mathcal{I}$. We define $\mathcal{J}= \mathcal{I} \otimes J_2$ with $J_2=\left[\begin{smallmatrix}
0& -1\\ 1& 0
\end{smallmatrix}\right]$. Let $\mathbb{S}^1$ define the unit circle. For $\gamma\in\mathbb{S}^1$, we define the transformation matrix by $R(\gamma)=\left[\begin{smallmatrix}
\cos(\gamma) & \sin(\gamma) \\ -\sin(\gamma) &\cos(\gamma) 
\end{smallmatrix}\right]$,
and $\mathcal{R}(\gamma)= \mathcal{I}\otimes R(\gamma)$.
Let $\mathds{1}_n$ be $n$-dimensional vector with all entries being one.
Given a set $\mathcal{A}$, we denote by $\norm{x}_\mathcal{A}=\inf_{a\in\mathcal{A}} d(x,a)$ the distance from a vector $x\in\mathbb{R}^n$ to the set~$\mathcal{A}$, where $d(\cdot,\cdot)$ is a distance metric. Given a dynamical system $\dot x= f(x)$, we denote by $x(t,x_0)$ its  trajectory at times $t>~0$ starting at $x(0)=x_0$. Given a matrix $M$, we denote by $\norm{M}$ its 2-norm, and given a symmetric matrix $A$, we denote by $\lambda_{\min}(A)$ and $\lambda_2(A)$ its smallest and its second-smallest eigenvalue, respectively.
\section{Control objectives}
\label{sec: setup}
\subsection{Network model}
\begin{figure}[ht!]
    \centering
    \includegraphics[scale=0.25, trim=2cm 0cm 0cm 2cm]{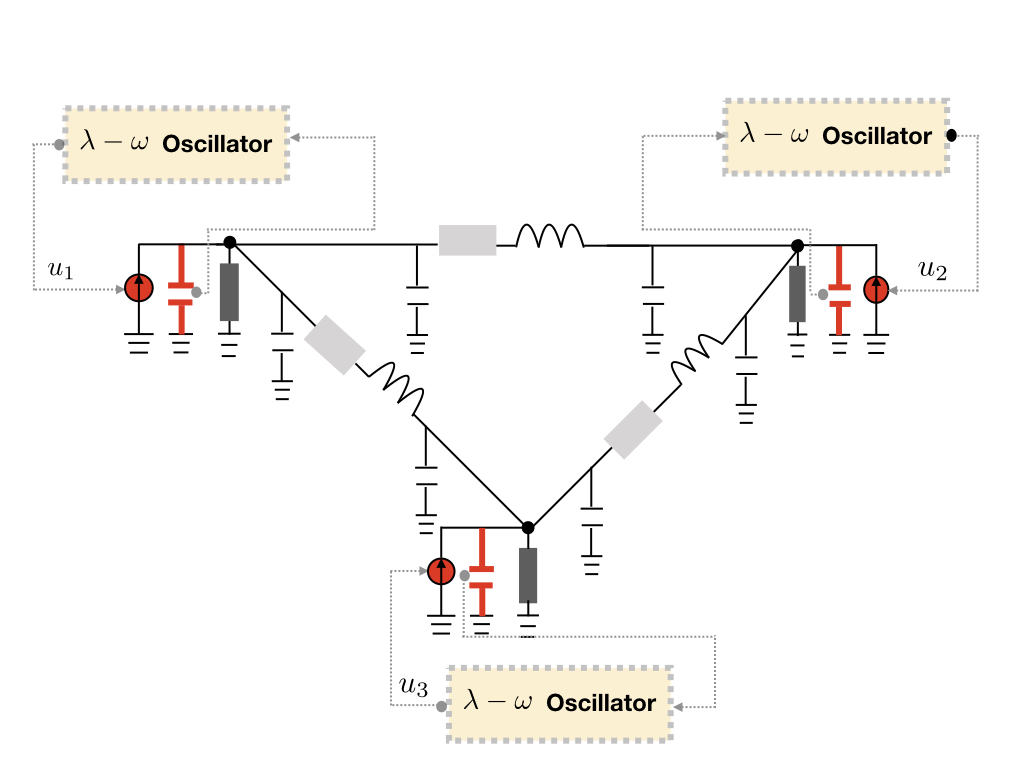}
    \caption{Power system setup under consideration}
    \label{fig:setup}
\end{figure}
Consider a power system model as shown in Figure \ref{fig:setup} consisting of averaged and balanced three-phase DC/AC converters  and connected via $\Pi$ transmission lines. Each line consists of a resistance $R_O>0$ and an inductance $L_O>0$ connected to a shunt capacitor with $C_O>0$. 
Each DC/AC converter is represented by a controllable current source set in parallel with an output capacitor. This is lumped with line capacitance into $C>0$ and connected to a resistive load with conductance $G>0$. The dynamics of the power system model can be described in the $\alpha\beta$-frame as follows, 
\begin{align}
\label{eq: pow-sys}
   C \,\dot v_c &= -G\, v_c- \mathbf{B} \, i_{O}+  u,\nonumber \\
     L_{O}\,\dot i_{O}&= -R_{O}\, i_{O}+ \mathbf{B}^\top\, v_c,
\end{align}
where $i_{O}=[i^\top_{O,1}, \dots,i^\top_{O,m}]^\top\in\real^{2m}$ is the line current and $v_c=[v^\top_{c,1}, \dots , v^\top_{c,n}]^\top\in\real^{2n}$ is the output capacitor voltage. We denote by $\mathbf{B}=I_2\otimes \mathcal{B}$ the extended incidence matrix.
Let $u=[u^\top_{1}, \dots , u^\top_{n}]^\top\in\real^{2n}$ be a controllable current source that represents our main control input to be designed in the sequel. For an input $u_k\in\real^2$, we denote its polar coordinates by $[r_k,\theta_k]^\top$, where  $u_{\alpha,k}=r_k \cos(\theta_k),u_{\beta,k}=r_k \sin(\theta_k)$, as well as $r=[r_1,\dots, r_n]^\top$ and $\theta=[\theta_1,\dots,\theta_k]^\top$.

\subsection{Control objectives}
We increment \eqref{eq: pow-sys} as follows, 
\begin{align}
\label{eq: objectives}
  u&=\mathcal{F}\,(u(s)\,, \hat y(s),
  0\leq s\leq t),\nonumber\\ 
  C \,\dot v_c &= -G\, v_c- \mathbf{B} \, i_{O}+  u,\nonumber\\
L_{O}\,\dot i_{O}&= -R_{O}\, i_{O} + \mathbf{B}^\top\, v_c,
 \end{align}
where $\mathcal{F}$ is an operator and $\hat y$ is the output measurement that will be specified in the sequel.
 
Let $N$ denote the dimension of the closed-loop  system \eqref{eq: objectives}, $\omega^*$ a desired steady state frequency and define $x=[u^\top, v^\top_c, i^\top_O]^\top$.
Given prescribed amplitudes $r^*=[r^*_1,\dots, r^*_n]^\top$ and phase angles $\theta^*=[\theta^*_{1},\dots, \theta^*_{n}]^\top$, we use the measurement $\hat y$ to design the control input $u$. This is achieved by designing the operator $\mathcal{F}$ as a dynamical oscillator $\dot u =f(u,\hat y)$ with vector field ${f}(u,\hat y)=[f_1( u_1,\hat y_1),\dots, f_n(u_n,\hat y_n)]^\top$ that induces sinusoidal waves at a desired amplitude $r^*$ and a desired angle $\theta^*=\omega^* t+\theta_0^*,\,\{\theta_0^*\in\mathbb{S}^1\}$.
In particular, we design the input $u$ to stabilize \eqref{eq: objectives} at a desired synchronous steady state set described by, 
 $$\mathcal{S}=\left\{ x\in\real^N,\, x=x^*\right\},$$ whereby,
    $$\mathcal{S}_u=\left\{u\in\real^{2n}, \,[r^\top,\theta^\top]^\top= [r^{*\top},\theta^{*\top}]^\top, \dot\theta^* =\omega^*\;\mathds{1}_n\right\}.$$ 
Note that the steady state values of $x^*$ are induced by the desired magnitude and phase angle of $[r^{*\top},\theta^{*\top}]^\top$. The sets $\mathcal{S}$ and $\mathcal{S}_u$ are frequency synchronous and $2\pi$-periodic. 

\section{Grid-forming ($\lambda-\omega$) VOC}
\label{sec: control-design}
\subsection{Control design}
Our control goal is to sustain AC oscillations at specified amplitude $r^*$ and phase angle $\theta^*$. For this, we propose the nonlinear dynamics inspired by {$(\lambda-\omega)$ systems} representing the kinetics of wave phenomena in reaction-diffusion models of biological systems \cite{murray2007mathematical},  
\begin{align*}
    f_k:=\left[\begin{smallmatrix}
        -\lambda_k(r_k,r^*_{k}) & \widetilde\omega_k(\theta_k, \theta^*_{k}) \\ -\widetilde\omega_k(\theta_k, \theta^*_{k}) & -\lambda_k(r_k,r^*_{k})
    \end{smallmatrix} \right] u_k+ {\widetilde h}_k(v_{c,k}).
\end{align*}
Here  $u_k$ is the input and $u^*_k$ is a given reference in polar coordinates  $[r^{*\top}, \theta^{*\top}]^\top$ and we have set $\hat y_k=v_{c,k}$. The function ${\widetilde h}_k(\cdot)$ depends explicitly on the local output voltage measurement $v_{c,k}$, whose reference $v^*_{c,k}$ is assumed to be available. We introduce the following amplitude function, 
 \begin{align}
 \label{eq: amp-fcn}
\lambda_k(r_k, r^*_k)=\gamma_k\,(\frac{r_k}{r^*_k}-1),     
 \end{align} 
 as well as the frequency function, 
 \begin{align}
  \label{eq: angl-fcn} 
 \widetilde\omega_k(\theta_k, \theta^*_k)=\alpha_k(\theta_k-\theta^*_k) -\omega^*,\, \alpha_k>0,\end{align}
where $\gamma_k$ and $\alpha_k$ are positive control gains and $\theta\in[0,2\pi)$. 
Note that, the frequency function \eqref{eq: angl-fcn} is a novelty in our work in comparison to other suggestions of virtual oscillator control. For a discussion, see Section \ref{subsec: interpre}. 

If ${\widetilde h}_k(v_{c,k})=0$, then the ($\lambda-\omega$) VOC dynamics can be expressed compactly in polar coordinates, as follows
\begin{align}
 \begin{bmatrix}
 \dot r_{k} \\ \dot \theta_{k}
 \end{bmatrix}= \begin{bmatrix}
 -r_{k} \,\lambda_k(r_{k}, r^*_{k}) \\ -\widetilde\omega_k(\theta_k, \theta^*_{k})
 \end{bmatrix}.
 \end{align}

Next, we design $${\widetilde h}_k(v_{c,k},v^*_{c,k})= \Pi_k v_{c,k},\;\; \Pi_k=I_2-\frac{v^*_{c,k} v^{*\top}_{c,k}}{v^{*\top}_{c,k}v^{*}_{c,k}}.$$

Note that, for $r_k=r_{k}^*$ and $\theta_k=\theta_k^*$, the ($\lambda-\omega$) VOC is a harmonic oscillator rotating at the desired frequency $\omega^*$.

By defining, 
\begin{align*}
\widetilde K\,(\theta,\theta^*)&= \mathcal{I}\otimes (J_2\, \widetilde\omega_k(\theta_k,\theta^*_{k})),\\
\Lambda\,(r,r^*)&= \mathcal{I}\otimes \left(I_2\,\lambda_k(r_k, r^*_{k})\right),\\ 
\Pi_c&=\mathrm{diag}(\Pi_1,\dots,\Pi_n ),
\end{align*}
the closed-loop system becomes,
\begin{subequations}
\label{eq: lambda-omega-voc}
\begin{align}
\dot u&=-\bigg( \Lambda\,(r,r^*)+\widetilde K\,(\theta,\theta^*)\bigg)\, u- \,\Pi_c v_c, \label{eq: input} \\
C \,\dot v_c &= -G\, v_c- \mathbf{B}\,i_{O}+ u,\\
L_{O}\,\dot i_{O}&= -R_{O}\, \, i_{O}+ \mathbf{B}^\top\, v_c.
\end{align}
\end{subequations}

A summarizing diagram of the overall closed-loop system is presented in Figure \ref{fig: diag}. 
\begin{figure}[h!]
     \centering
     \includegraphics[scale=0.35, trim=4cm 8cm 0cm 0cm, clip]{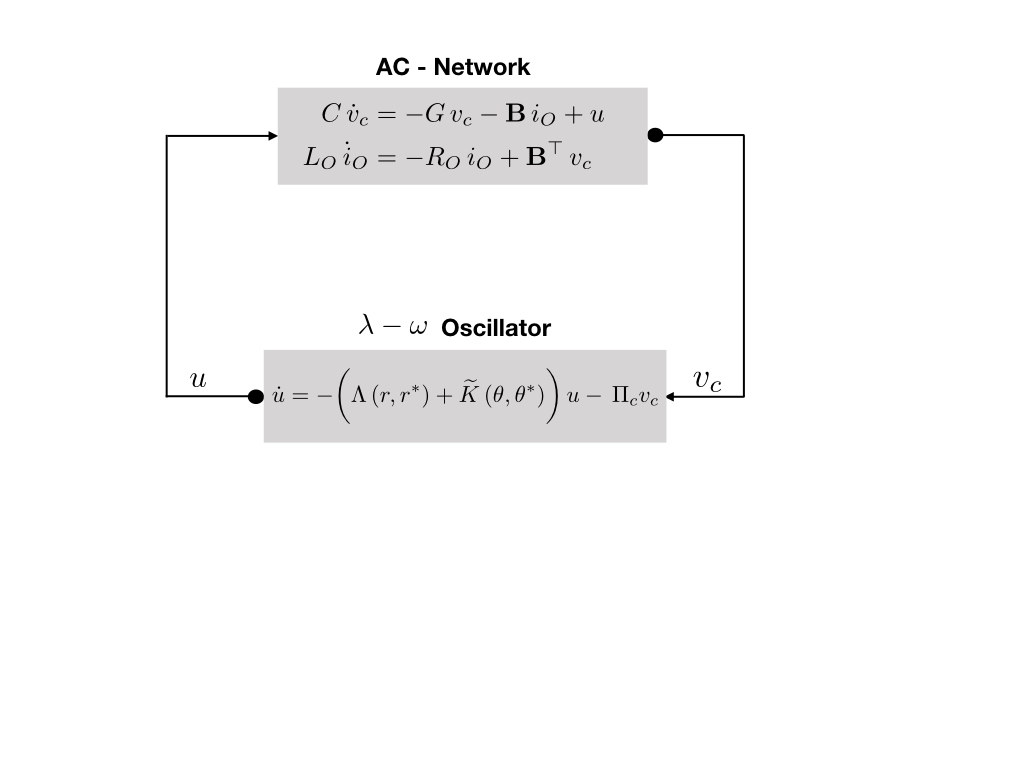}
     \caption{Summarizing diagram of $(\lambda-\omega)$ VOC.}
     \label{fig: diag}
 \end{figure}

\subsection{Interpretation of grid-forming ($\lambda-\omega$) VOC}
\label{subsec: interpre}

By design, the ($\lambda-\omega$) virtual oscillator in \eqref{eq: lambda-omega-voc} is a grid-forming converter in the sense of \cite{debry2019characterization}. 

\paragraph{Droop control}
Droop control is a well-studied control strategy both in theory and practice. For this reason, it is regarded as a compass for evaluating the performance of novel control strategies as well as their compatibility with other controllers (with droop behavior) and synchronous machines. \begin{proposition}
\label{prop: droop}
Consider the grid-forming ($\lambda-\omega$) VOC in \eqref{eq: lambda-omega-voc}. Define 
\begin{align*}P_k&= u_{k}^\top v_{c,k},~~ P_{ref,k}= u_k^\top \frac{v^{*}_{c,k} v^{*\top}_{c,k}}{v^{*\top}_{c,k} v^{*}_{c,k}}\,v_{c,k},\\
Q_k&= u_{k}^\top  {J}_2  v_{c,k},~~ Q_{ref,k}= u_k^\top J_2 \frac{v^{*}_{c,k} v^{*\top}_{c,k}}{v^{*\top}_{c,k} v^{*}_{c,k}}\, v_{c,k}.\end{align*}
Then, the ($\lambda-\omega$) VOC has a droop-like behavior given by the following equations,
\begin{subequations}
\begin{align}
\label{eq: amp-droop}
\dot r_k&= -\gamma_k\, r_k\left(\frac{r_k}{r^*_{k}}-1\right)+ \frac{-P_k+ P_{ref,k}}{r_k},\\
\label{eq: freq-droop}
\dot \theta_k &= {\omega_0}-\alpha_k \left({\theta_k}-{\theta^*_{k}}\right)+\frac{-Q_k+Q_{ref,k}}{r_k}.
\end{align}
\end{subequations}
\end{proposition}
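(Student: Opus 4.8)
\textbf{Proof strategy for Proposition~\ref{prop: droop}.} The plan is to rewrite the Cartesian input dynamics \eqref{eq: input} in polar coordinates and match the two resulting scalar equations with \eqref{eq: amp-droop}--\eqref{eq: freq-droop}. Taking the $k$-th $2\times 2$ block of \eqref{eq: input}, and using that the $k$-th diagonal blocks of $\Lambda(r,r^*)$, $\widetilde K(\theta,\theta^*)$ and $\Pi_c$ are $\lambda_k(r_k,r^*_k)I_2$, $\widetilde\omega_k(\theta_k,\theta^*_k)J_2$ and $\Pi_k$, the $k$-th converter obeys
\begin{equation*}
\dot u_k = -\lambda_k(r_k,r^*_k)\,u_k - \widetilde\omega_k(\theta_k,\theta^*_k)\,J_2 u_k - \Pi_k v_{c,k}.
\end{equation*}
On the set $\{r_k>0\}$ where the polar representation is well defined, I would substitute $u_k = r_k[\cos\theta_k,\sin\theta_k]^\top$ and project $\dot u_k$ onto $u_k$ and onto $J_2 u_k$, using the identities $\dot r_k = u_k^\top\dot u_k/r_k$ and $\dot\theta_k = -u_k^\top J_2\dot u_k/r_k^2$, which come from differentiating this parametrization.

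Carrying out the substitution then uses only the elementary facts $u_k^\top u_k = r_k^2$, $u_k^\top J_2 u_k = 0$ (skew symmetry) and $J_2^2 = -I_2$, so that the $\lambda_k$ and $\widetilde\omega_k$ terms collapse to $-r_k\lambda_k(r_k,r^*_k)$ in the radial equation and to $-\widetilde\omega_k(\theta_k,\theta^*_k)$ in the angular equation. The sole nonroutine ingredient is the feedback term $-\Pi_k v_{c,k}$: from $\Pi_k = I_2 - v^*_{c,k}v^{*\top}_{c,k}/(v^{*\top}_{c,k}v^*_{c,k})$ and the definitions of $P_k, P_{ref,k}, Q_k, Q_{ref,k}$ one reads off $u_k^\top\Pi_k v_{c,k} = P_k - P_{ref,k}$ and $u_k^\top J_2\Pi_k v_{c,k} = Q_k - Q_{ref,k}$, i.e. the rank-one correction inside $\Pi_k$ is exactly the reference active/reactive power injections. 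Plugging these in gives $\dot r_k = -r_k\lambda_k(r_k,r^*_k) + (P_{ref,k}-P_k)/r_k$ and, after the same manipulations for the angle, $\dot\theta_k = -\widetilde\omega_k(\theta_k,\theta^*_k)$ plus a term proportional to the reactive-power mismatch $Q_{ref,k}-Q_k$. Substituting $\lambda_k(r_k,r^*_k)=\gamma_k(r_k/r^*_k-1)$ and $\widetilde\omega_k(\theta_k,\theta^*_k)=\alpha_k(\theta_k-\theta^*_k)-\omega^*$, and identifying $\omega_0$ with the nominal frequency $\omega^*$, reproduces \eqref{eq: amp-droop}--\eqref{eq: freq-droop}.

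I do not expect a genuine obstacle: the statement is essentially a coordinate computation. The one place that needs care is keeping the $J_2$-algebra organized so that the cross terms ($u_k^\top J_2 u_k$, the $\lambda_k$ contribution to $\dot\theta_k$) vanish, and — more conceptually — recognizing that the voltage feedback $-\Pi_k v_{c,k}$ in \eqref{eq: input} decomposes, along $u_k$ and along $J_2 u_k$, into an active-power error and a reactive-power error; this splitting is what makes the droop interpretation possible and is the real content of the proposition. One should also note that the polar form (and hence the droop law) is valid only away from $r_k=0$, and that the signs and normalizations of $P_k$ and $Q_k$ must be tracked consistently with the convention fixed in \eqref{eq: lambda-omega-voc}.
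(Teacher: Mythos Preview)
Your proposal is correct and follows essentially the same route as the paper: both arguments extract the polar dynamics by projecting the $k$-th block of \eqref{eq: input} onto $u_k$ (via $\dot r_k = u_k^\top\dot u_k/r_k$) and onto $J_2 u_k$ (via $\dot\theta_k$ obtained from $u_k^\top J_2^\top\dot u_k$), use $u_k^\top J_2 u_k=0$ and $J_2^2=-I_2$ to kill the cross terms, and then read off $u_k^\top\Pi_k v_{c,k}=P_k-P_{ref,k}$ and $u_k^\top J_2\Pi_k v_{c,k}=Q_k-Q_{ref,k}$ from the rank-one structure of $\Pi_k$. Your write-up is in fact slightly more explicit than the paper's about the domain $r_k>0$ and about why the $\lambda_k$ and $\widetilde\omega_k$ contributions drop out of the angular and radial equations, respectively.
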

\begin{proof}
From equation \eqref{eq: input}, we use the relationship $r_k=\sqrt{u_k^\top u_k}$ and calculate  
\begin{align*}
\dot r_k&=\frac{1}{2 r_k}(u_k^\top \dot u_k+ \dot u^\top_k u_k)\\
&=\frac{1}{r_k} u_k^\top (-[\lambda(r_k, r^*_{k})I_2+J_2\omega_k(\theta_k,\theta^*_{k})]u_k- \Pi_k v_{c}), \end{align*}
which together with the definition of $P_k$ amounts to \eqref{eq: amp-droop}.

For the second statement, we define $u_k=r_k\begin{bmatrix}
 \sin(\theta_k) \cos(\theta_k)
\end{bmatrix}^\top$. Its time derivative is given by $\dot u_k=\dot r_k\begin{bmatrix}
 \sin(\theta_k) \cos(\theta_k)
\end{bmatrix}^\top+\dot \theta_k r_k J_2 \begin{bmatrix}
 \sin(\theta_k) \cos(\theta_k)
\end{bmatrix}^\top$.
We multiply from the left by $u_k^\top J^\top_2$ to obtain $$\dot\theta_k=\frac{-1}{r_k}[\sin(\theta_k) \cos(\theta_k)] J_2 \,\dot u_k.$$ 
By substituting $\dot u_k$ as given in \eqref{eq: input}, this amounts to \eqref{eq: freq-droop}.
\end{proof}
 {Around the nominal frequency and amplitude}, in addition to the active power to amplitude, or $(P-r)$ droop, the adaptive frequency in \eqref{eq: angl-fcn} introduces a droop behavior \eqref{eq: freq-droop} between the reactive power and the phase angle deviations, or $(Q-\theta)$ droop. 
\paragraph{Van-der-Pol/Liénard Oscillator}
The virtual oscillator control was initially proposed in \cite{johnson2013synchronization} based on the implementation of Van Der Pol Oscillator, or Liénard-oscillator in general. The Liénard Oscillator is the sum of a linear harmonic oscillator with an additional non-linearity implemented electrically as a nonlinear voltage-dependent current source. In the proposed oscillator, the non-linearity in ($\lambda-\omega$) oscillator is instead in the amplitude (dependent on the input magnitude) as well as the frequency (dependent on the phase angle) functions.

\paragraph{Hopf-like Oscillators}
\begin{enumerate}
\item {\em Dispatchable-VOC (d-VOC)}
The dispatchable virtual oscillator control follows by design the normal-form of Hopf bifurcation \cite{colombino2019global}. The ($\lambda-\omega$) VOC links structurally to the amplitude control in d-VOC. 
\item {\em Andronov Hopf Oscillator}
Another variation of Hopf oscillators is the Andronov Hopf Oscillator proposed in \cite{li2020advanced, lu2019grid}. This is the same as d-VOC except for an amplitude function squared in its arguments.
\end{enumerate}
The main novelty of ($\lambda-\omega$) VOC in comparison to d-VOC/Andronov Hopf is the design of an adaptive frequency function via $\omega_k(\cdot, \cdot)$ based on phase angle deviations relative to nominal. 

{\paragraph{Matching control}
The matching control can be interpreted as a virtual oscillator with a feedback-based frequency function that exploits DC measurements (see Fig. 2 in \cite{jouini2016grid}) but without an amplitude regulation function. In general, the frequency function is a degree of freedom in the design of ($\lambda-\omega$) VOC and can be incremented with a feedback term as in the matching control of the form $$\omega(v_{dc},v^*_{dc},\theta,\theta^*)=\eta (v_{dc}-v^*_{dc})-\alpha (\theta-\theta^*),$$
where $v_{dc}$ is the DC capacitor voltage  and $v^*_{dc}$ the nominal DC voltage.
}

\section{Stability analysis}
\label{sec: stability}
We transform the overall closed-loop system into the $dq$-frame rotating at $\theta_{dq}(t)= \omega^*\,t$ as follows: 
\begin{align*}
 i_{dq}&=\mathcal{R}(\omega^*\,t)i_{O},\; v_{dq}=\mathcal{R}(\omega^*\,t) v_{c},\; u_{dq}=\mathcal{R}(\omega^*\,t) u,
\end{align*}
where $\mathcal{R}(\omega^*\,t)$ is a rotation matrix.
The overall closed-loop dynamics can be written in rotating $dq-$frame as,
\begin{align}
\label{eq: pow-sys-dq}
\dot {u}_{dq}&=  - (\Lambda(r,r^*) + K(\theta, \theta^*) )\, u_{dq}- \Pi_{dq} v_{dq},\nonumber\\
C \,\dot  v_{dq} &= -Z_G\, {v}_{dq}-\mathbf{B} \,  v_{dq}+  u_{dq},\nonumber\\
L_{O}\,\dot { i}_{dq}&= -Z_{O}\, {i}_{dq} + \mathbf{B}^\top\, v_{dq},
\end{align}  
where $\Lambda(r,r^*)$ is the amplitude function in \eqref{eq: lambda-omega-voc} and $K(\theta, \theta^*)= \mathcal{I}\otimes (J_2\,\omega_k(\theta_k,\theta^*_{k}))$ with $\omega (\theta_k,\theta^*_{k})=\alpha_k(\theta_k-\theta^*_k)$. Furthermore, we define $Z_G=G\mathcal{I}+ C\mathcal{J}\omega^*$ and $Z_O=R_O\mathcal{I}+ L_O\mathcal{J}\omega^*$ as well as $\Pi_{dq}=\mathcal{I}-\frac{v^*_{dq}v^{*\top}_{dq}}{v^{*\top}_{dq}v^*_{dq}}$. 

The dynamics \eqref{eq: pow-sys-dq} satisfy the following steady state equations in the rotating frame,
\begin{align}
\label{eq: ss-eq}
0&= -(\Lambda(r^*,r^*)+ K(\theta^*, \theta^*))\, u^*_{dq}, \nonumber\\\nonumber
0&= -Z_G\, v^*_{dq}-\mathbf{B} \, i^*_{dq}+ \, u^*_{dq},\\ 
0&= -Z_{O}\, i^*_{dq}+\mathbf{B}^\top\, v^*_{dq},
\end{align}

Note that $-\left(\Lambda(r^*,r^*)+K(\theta^*, \theta^*)\right)\; u^*_{dq}=0$ for $u^*_{dq}=0$, or $r=r^*, \theta=\theta^*$. 
To see that the origin is also a solution to the steady state equations \eqref{eq: ss-eq}, note that the remaining states satisfy the following equations with a full rank matrix.
\begin{align}
\label{eq: zero-input}
{\begin{bmatrix}
 -Z_O & \mathbf{B}^\top  \\
 -\mathbf{B} & -Z_G \\ 
\end{bmatrix}} \begin{bmatrix}
 i_{dq}^*\\v_{dq}^*
\end{bmatrix}=  \begin{bmatrix}
0\\0
\end{bmatrix},  
\end{align}
Next, we make the following assumption, 
\begin{assumption}
\label{ass: ass1}
Let $\tau^*>0$ be sufficiently small. Assume that,
\begin{align}
\frac{L_{O}}{\sqrt{L_{O}^2\omega^{*2}+R_O^2}}&<\tau^*. \label{eq: tau2}    
\end{align}
\end{assumption}

Under Assumption \ref{ass: ass1}, we can avail ourselves of singular perturbation theory for the stability analysis. For this, we write the dynamical system \eqref{eq: pow-sys-dq} as two coupled subsystems $\Sigma_1$ and $\Sigma_2$. The system dynamics can be described by the following set of differential algebraic equations,
\begin{align*}
\Sigma_1: \dot {u}_{dq}&=  - (\Lambda(r,r^*) + K(\theta, \theta^*) )\, u_{dq}- \Pi_{dq} v_{dq},\\
\Sigma_2:{C} \dot v_{dq}&= - Z_G\, v_{dq}-\mathbf{B}\, i_{dq} + \, u_{dq},\\
 0 &= -Z_O\, i_{dq}+\mathbf{B}^\top v_{dq}\,,
\end{align*}
where the algebraic equation is considered as limit of the fast dynamics of the boundary-layer system with $y'=i_{dq}-i_{dq}^*$, where $i_{dq}^*$ solves the algebraic equation of $\Sigma_2$. Furthermore, the boundary-layer system reads as,
\begin{align}
\label{eq: bound-layer1}
 \dot y'&=-Z_O\, (y'+Z^{-1}_O \,\mathbf{B}^{\top} v_{dq})+\mathbf{B}^\top v_{dq}\nonumber\\&=-Z_O y'. \,
\end{align}
The origin of \eqref{eq: bound-layer1} is exponentially stable (because $-Z_O$ is Hurwitz). Thus, the reduced system is defined by, 
\begin{subequations}
\begin{align}
&\Sigma_1: \dot {u}_{dq}=  - (\Lambda(r,r^*) + K(\theta, \theta^*) )\, u_{dq}- \Pi_{dq} v_{dq},\\
&\Sigma_2: {C} \dot v_{dq}= - L v_{dq} + \, u_{dq}, 
\end{align}
\label{eq: red-sys} 
\end{subequations}
with the impedance matrix $L=Z_G+\mathbf{B}\, Z_O^{-1} \mathbf{B}^\top$.

Theorem 11.1 in \cite{K02} can be applied to show that the difference of solution trajectories between the original \eqref{eq: pow-sys-dq} and reduced system \eqref{eq: red-sys} is of the order of $\mathcal{O}\left({L_{O}}/{\sqrt{L_{O}^2\omega^{*2}+R_O^2}}\right)$ for {\em finite} time intervals.

Next, let $\mathcal{S}'$ denote the steady state set resulting from setting \eqref{eq: red-sys} to zero. In the sequel, we study almost global stability of  $\mathcal{S}'$ for any trajectory of \eqref{eq: red-sys}. We study stability with respect to sets in the sense of Theorem 2.8 in \cite{lin1996smooth}. We refer the interested reader for details therein.

Define $\xi_2=\lambda_{\min}(L+L^\top),\, \beta_1=1, \,\beta_2=\norm{L^{-1}}^2+\gamma_{\max}\norm{L^{-1}},\, \zeta=\norm{L^{-1}}$ and $\gamma_{\max}=\max\{\gamma_1,\dots,\gamma_n\}$.

Consider the following condition.
\begin{condition}
\label{cond: stability}
For a sufficiently small $\alpha^* >0$  and a constant $\xi_1>0$, assume that,
\begin{subequations}
\label{eq: cdts}
\begin{align}
\max\{\alpha_1,\dots, \alpha_n\}<\alpha^*&, \label{eq: alpha}\\
{ \Pi (\gamma_{\max}-\Pi_{dq} L^{-1})+(\gamma_{\max}-\Pi_{dq} L^{-1})^\top \Pi}&<{ -\xi_1\Pi}, \label{eq: step1}\\
\frac{C}{\sqrt{C^2\omega^{*2}+G^2}}<\frac{\xi_1 \xi_2}{\xi_1 \zeta+\beta_1\beta_2}&, \label{eq: step2}
\end{align}
\end{subequations}
\end{condition}

We state the main result in the following theorem.
\begin{theorem}
\label{thm: main-thm}
Consider the system \eqref{eq: red-sys} under Condition~\ref{cond: stability}. The steady state set $\mathcal{S}'$ is almost globally asymptotically stable for any trajectory of \eqref{eq: red-sys}.
\end{theorem}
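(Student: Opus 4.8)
The plan is to strip the reduced system \eqref{eq: red-sys} down to a planar oscillator network by using the two small parameters hidden in Condition~\ref{cond: stability}---the ratio $C/\sqrt{C^2\omega^{*2}+G^2}=C\,\|Z_G^{-1}\|$ and $\max_k\alpha_k$---and then to read off almost global stability in polar coordinates. First I would treat the capacitor voltages $v_{dq}$ as the fast state of \eqref{eq: red-sys}: the boundary-layer system $d v_{dq}/d\tau=-L\,v_{dq}+u_{dq}$ is globally exponentially stable with quasi-steady-state $v_{dq}=L^{-1}u_{dq}$, since $L+L^\top=2G\,\mathcal I+\mathbf B\,(Z_O^{-1}+Z_O^{-\top})\,\mathbf B^\top\succeq 2G\,\mathcal I\succ 0$, so $\xi_2=\lambda_{\min}(L+L^\top)>0$. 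Inequality~\eqref{eq: step2} is exactly the composite-Lyapunov interconnection bound for this fast/slow split (fast rate $\xi_2$, slow rate $\xi_1$, cross terms $\zeta,\beta_1,\beta_2$), so by the singular-perturbation theory of \cite{K02} the stability of \eqref{eq: red-sys} is inherited from the \emph{core model}
\[
\dot u_{dq}=-\bigl(\Lambda(r,r^*)+K(\theta,\theta^*)+\Pi_{dq}L^{-1}\bigr)u_{dq},
\]
whose trajectories are $\mathcal O(C\,\|Z_G^{-1}\|)$-close to those of \eqref{eq: red-sys}, uniformly in time.

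Next I would write $u_{dq,k}=r_k\,[\cos\psi_k,\sin\psi_k]^\top$; in the $dq$-frame the set-point angle is constant, so let $\tilde\psi_k$ denote the angle deviation. Skew-symmetry of $J_2$ removes the $K$-term from the radial equations, leaving $\dot r_k=-\gamma_k r_k(r_k/r_k^*-1)-r_k^{-1}u_{dq,k}^\top(\Pi_{dq}L^{-1}u_{dq})_k$ and $\dot{\tilde\psi}_k=-\alpha_k\tilde\psi_k-r_k^{-2}u_{dq,k}^\top J_2^\top(\Pi_{dq}L^{-1}u_{dq})_k$, and the coupling vanishes at the target, $\Pi_{dq}L^{-1}u_{dq}^*=\Pi_{dq}v_{dq}^*=0$, by construction of $\Pi_{dq}$. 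The role of inequality~\eqref{eq: step1} is to certify that $\Pi_{dq}L^{-1}$ injects enough damping on the subspace transverse to the $u_{dq}^*$-direction to dominate the (at most $\gamma_{\max}$) expansion the amplitude term produces there; this pins down the rate $\xi_1$ and, together with $\max_k\alpha_k<\alpha^*$ (so that $K$ enters only as a small rotational perturbation), lets the amplitude regulation $-\gamma_k r_k(r_k/r_k^*-1)$ pull $r$ onto a manifold $\mathcal O(\alpha^*)$-close to $r^*$, on which the phase equation reduces to $\dot{\tilde\psi}_k\approx-\alpha_k\tilde\psi_k$ and hence $\tilde\psi\to 0$ (so $r\to r^*$). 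An invariance/LaSalle argument on the core model then gives convergence to $\mathcal S'$ from a large set.

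For the almost global statement I would show that $\mathcal S'$ and the origin $u_{dq}=0$ are the only invariant sets of the core model---$u_{dq}=0$ being the other root of $(\Lambda+K+\Pi_{dq}L^{-1})u_{dq}=0$. The origin is unstable: the amplitudes are expansive for small $r$, $\dot r_k\approx\gamma_k r_k+\mathcal O(\|u_{dq}\|)$, and (again by \eqref{eq: step1} and smallness of the $\alpha_k$) the $\mathcal O(\|u_{dq}\|)$ term cannot reverse this in the $u_{dq}^*$-aligned directions, where $\Pi_{dq}L^{-1}$ acts trivially; hence the region of attraction of the origin is a proper, closed, measure-zero invariant set. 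Since $-\gamma_k r_k(r_k/r_k^*-1)$ is ultimately dissipative in $r$, all solutions are bounded, so almost every initial condition has $\mathcal S'$ as its $\omega$-limit set. Transferring this back through the singular-perturbation reduction that eliminates $v_{dq}$, and phrasing the conclusion for the $2\pi$-periodic, frequency-synchronous set $\mathcal S'$ in the set-stability framework of \cite{lin1996smooth}, yields almost global asymptotic stability of $\mathcal S'$ for \eqref{eq: red-sys}.

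The hard part is everything that separates ``almost global'' from ``local plus a large basin.'' One must (i) rule out spurious $\omega$-limit sets of the coupled amplitude/phase core model besides $\mathcal S'$ and the origin, which forces one to control the cross term $\Pi_{dq}L^{-1}u_{dq}$ globally, not merely near $\mathcal S'$---this is the real content of \eqref{eq: step1} and of the smallness of the $\alpha_k$; (ii) make the instability of the origin rigorous despite the non-smoothness of $K$ at $u_{dq}=0$, and deduce that its stable set has measure zero (hyperbolicity / center-manifold care, or a density-function argument in the spirit of Rantzer); and (iii) check that the nested singular-perturbation steps do not erode genericity of the basin---Tikhonov-type conclusions are naturally finite-time and semi-global, so one needs the exponential-stability version together with the global boundedness estimate above---and that Condition~\ref{cond: stability} is non-vacuous, e.g., by fixing the network and the $\gamma_k$ so that \eqref{eq: step1} holds with some $\xi_1>0$, then taking $\max_k\alpha_k$ and $C$ small enough for \eqref{eq: alpha} and \eqref{eq: step2}.
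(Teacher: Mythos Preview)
Your overall architecture matches the paper's: eliminate $v_{dq}$ from \eqref{eq: red-sys} by a fast/slow split with boundary-layer variable $y=v_{dq}-L^{-1}u_{dq}$, analyze the resulting core model $\dot u_{dq}=-(\Lambda+K+\Pi_{dq}L^{-1})\,u_{dq}$, show the origin is an unstable equilibrium with a measure-zero stable set, and assemble the conclusion. Your reading of \eqref{eq: step2} as the composite-Lyapunov interconnection bound is exactly right, and your instability argument at the origin is in the same spirit as the paper's (the paper linearizes, obtaining symmetric part $\gamma_k I_2$ plus $-\Pi_{dq}L^{-1}$, which is strictly positive along $v_{dq}^*$, and then invokes a standard zero-measure result).

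Where your proposal diverges---and where it has a gap---is the analysis of the core model. You order things as ``amplitudes first, then phases'': the $\gamma_k$-term pulls $r$ to a manifold near $r^*$, and on that manifold the phase law collapses to $\dot{\tilde\psi}_k\approx-\alpha_k\tilde\psi_k$. Neither step is justified. The coupling term $r_k^{-1}u_{dq,k}^\top(\Pi_{dq}L^{-1}u_{dq})_k$ in your radial equation does not vanish (and is not $\mathcal O(\alpha^*)$) on $\{r=r^*\}$ unless the phases are already correct, so you cannot conclude amplitude convergence first; and even if you could, the residual phase dynamics would still carry the full $\Pi_{dq}L^{-1}$ coupling, not just $-\alpha_k\tilde\psi_k$---and relying on $-\alpha_k\tilde\psi_k$ for synchronization contradicts the very smallness of $\alpha_k$ you have just assumed. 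The paper's route is the reverse decomposition: it first proves exponential convergence to the \emph{one-dimensional ray} $\mathcal S'_{uu}=\mathrm{span}\{u_{dq}^*\}$ via the projector Lyapunov function $\mathcal V_1(u_{dq})=u_{dq}^\top\Pi\,u_{dq}$ with $\Pi=\mathcal I-u_{dq}^*u_{dq}^{*\top}/\|u_{dq}^*\|^2$. Condition~\eqref{eq: step1} is precisely the statement that $\dot{\mathcal V}_1\le-\xi_1\mathcal V_1$, i.e.\ the network term $\Pi_{dq}L^{-1}$---not the $\alpha_k$-term---supplies the transverse contraction; \eqref{eq: alpha} is used only to absorb the sign-indefinite contribution of $K$ into that estimate. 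Landing on the ray already fixes all phases and amplitude \emph{ratios}; a second function $\mathcal Z(r)=\tfrac12\sum_k(1-r_k/r_k^*)^2$ and LaSalle then handle the remaining scalar amplitude on $\mathcal S'_{uu}\setminus\{0\}$.

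Finally, the paper does not transfer stability from the core model to \eqref{eq: red-sys} by a Tikhonov closeness-of-trajectories statement, as your phrasing suggests, but works directly with the composite function $\mathcal W=(1-d)\mathcal V_1+\tfrac d2\|y\|^2$ on the full system \eqref{eq: red-sys} and shows $\dot{\mathcal W}$ is negative definite with respect to $\mathcal S'$. This sidesteps the finite-time/semi-global issues you raise in (iii): almost-global asymptotic stability of $\mathcal S'$ is obtained in one shot, with the zero-measure exception inherited from Lemma~\ref{lem: zero-leb}.
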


Inspired by \cite{colombino2019global,gross2019effect},  we prove Theorem \ref{thm: main-thm} by proposing the following road map: 
\begin{itemize}
\item \underline{Step 1}:  Consider the reduced system described by,
\begin{align}
\label{eq: red-sys1}
\dot u_{dq}= -\left(\Lambda(r,r^*) + K(\theta, \theta^*)+ \Pi_{dq}\, L^{-1} \right)u_{dq}.
\end{align}
This step aims to prove the following Proposition. 
\begin{proposition}
\label{prop: al-glo-stab}
Consider the reduced system \eqref{eq: red-sys1}. Under Condition \ref{cond: stability}, the steady state set $\mathcal{S}'_u$ of \eqref{eq: red-sys1}  is almost globally asymptotically stable for any trajectory of \eqref{eq: red-sys1}. 
\end{proposition}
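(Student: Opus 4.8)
The plan is to analyse \eqref{eq: red-sys1} in the per-node polar coordinates $(r_k,\theta_k)$ and to establish, in the spirit of the d-VOC analysis of \cite{colombino2019global}, that: (i) almost every trajectory enters a compact annular region on which $r_k$ stays bounded away from $0$ and $\infty$; (ii) on that region the distance to $\mathcal{S}'_u$ admits a strict Lyapunov function; and (iii) the only other equilibrium, $u_{dq}=0$, is a repeller with a zero-measure stable set. Via the set-stability framework of \cite{lin1996smooth} this yields almost global asymptotic stability of $\mathcal{S}'_u$. First I would list the equilibria: since $v_{dq}^{*}=L^{-1}u_{dq}^{*}$ and $\Pi_{dq}v_{dq}^{*}=0$ we have $\Pi_{dq}L^{-1}u_{dq}^{*}=0$, and $\Lambda(r^{*},r^{*})=K(\theta^{*},\theta^{*})=0$, so $\mathcal{S}'_u=\{u_{dq}:r=r^{*},\ \theta=\theta^{*}\}$ is an equilibrium set, and a short computation shows $u_{dq}=0$ is the only other one; its instability is inherited from $\lambda_k(0,r_k^{*})=-\gamma_k<0$, i.e.\ the amplitude field points radially outward at each node, and a hyperbolicity estimate shows the stable set of the origin has zero Lebesgue measure.

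Second, the annulus. Because $K(\theta,\theta^{*})$ is block-skew it drops out of $\tfrac12\tfrac{d}{dt}\|u_{dq}\|^{2}=-u_{dq}^{\top}\big(\Lambda(r,r^{*})+\Pi_{dq}L^{-1}\big)u_{dq}$, and node-wise $r_k\dot r_k=-\gamma_k\big(\tfrac{r_k}{r_k^{*}}-1\big)r_k^{2}-u_{dq,k}^{\top}(\Pi_{dq}L^{-1}u_{dq})_k$. Condition \eqref{eq: step1} is precisely the statement that, after the projection, the coupling operator is dominated by the amplitude dissipation carried by $\gamma_{\max}$; using it I would show $\tfrac{d}{dt}\|u_{dq}\|^{2}<0$ for $\|u_{dq}\|$ large and $>0$ for $\|u_{dq}\|$ small and nonzero, so that off the stable set of the origin every trajectory reaches in finite time a forward-invariant compact annulus $\mathcal{A}=\{c_{1}\le r_k\le c_{2}\ \forall k\}$. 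On $\mathcal{A}$ the polar chart is smooth away from the angle cut locus, and restricting further to the open dense full-measure region $\{|\theta_k-\theta_k^{*}|<\pi\ \forall k\}$ — into which trajectories enter because $\dot\theta_k=-\alpha_k(\theta_k-\theta_k^{*})-\tfrac1{r_k^{2}}u_{dq,k}^{\top}J_2^{\top}(\Pi_{dq}L^{-1}u_{dq})_k$ does not overshoot the antipode — makes the vector field smooth.

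Third, convergence. On $\mathcal{A}\cap\{|\theta-\theta^{*}|<\pi\}$ I would use a Lyapunov function $W=\sum_k(r_k-r_k^{*})^{2}+c\sum_k(\theta_k-\theta_k^{*})^{2}$, $c>0$ to be chosen (or the quadratic $\tfrac12\|u_{dq}-u_{dq}^{*}\|^{2}$). Its derivative splits into a strictly negative radial part $-\sum_k\tfrac{2\gamma_k r_k}{r_k^{*}}(r_k-r_k^{*})^{2}$, a strictly negative angular part $-2c\sum_k\alpha_k(\theta_k-\theta_k^{*})^{2}$ contributed by $K$, and cross terms involving $\Pi_{dq}L^{-1}u_{dq}$; since this coupling vanishes on $\mathcal{S}'_u$ (as $\Pi_{dq}L^{-1}u_{dq}^{*}=0$), the cross terms are of matching quadratic order in the deviation, and with $\max_k\alpha_k<\alpha^{*}$ small and \eqref{eq: step1} in force one can fix $c$ so that $\dot W<0$ off $\mathcal{S}'_u$. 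LaSalle's invariance principle on the compact invariant set $\mathcal{A}\cap\{|\theta-\theta^{*}|<\pi\}$ then drives every such trajectory into $\mathcal{S}'_u$; as the complement of $\{|\theta-\theta^{*}|<\pi\}$ together with the stable set of the origin has zero Lebesgue measure, and $\mathcal{S}'_u$ is Lyapunov stable by the same $W$, almost global asymptotic stability of $\mathcal{S}'_u$ follows.

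The step I expect to be the main obstacle is the cross-term bookkeeping that links the annulus estimate and the Lyapunov decrease: turning \eqref{eq: step1} into a genuine domination of the terms $u_{dq,k}^{\top}(\Pi_{dq}L^{-1}u_{dq})_k$ and their tangential analogues requires treating all nodes jointly — the projection $\Pi_{dq}$ couples them — rather than node by node, and one must simultaneously choose the weight $c$ and the threshold $\alpha^{*}$ so that the radial, angular and coupling contributions assemble into a sign-definite form on all of $\mathcal{A}$. A secondary nuisance is the non-smoothness of $\widetilde\omega_k$ on the cut locus of the angle, handled by observing that it only enlarges the excluded zero-measure set and that no trajectory off that set meets it.
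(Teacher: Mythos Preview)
Your plan is plausible but takes a genuinely different route from the paper. The paper does \emph{not} work in polar coordinates and does not build an annulus; instead it runs a two-stage cascade in Cartesian coordinates. First it uses the projector Lyapunov function $\mathcal{V}_1(u_{dq})=u_{dq}^\top\Pi\,u_{dq}$ with $\Pi=\mathcal{I}-\tfrac{u_{dq}^*u_{dq}^{*\top}}{u_{dq}^{*\top}u_{dq}^*}$ to prove exponential convergence to the one-dimensional subspace $\mathcal{S}'_{uu}=\mathrm{span}\{u_{dq}^*\}$; Condition~\eqref{eq: step1} is literally the matrix inequality that makes $\dot{\mathcal{V}}_1\le -\xi_1\mathcal{V}_1$, so no cross-term bookkeeping is needed at this stage. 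Second, once on (or asymptotically near) the ray, the amplitude Lyapunov function $\mathcal{Z}(r)=\tfrac12\sum_k(1-r_k/r_k^*)^2$ is used, and the coupling term $\Pi_{dq}L^{-1}u_{dq}$ is simply discarded because $\Pi_{dq}L^{-1}u_{dq}\to 0$ on $\mathcal{S}'_{uu}$; LaSalle then yields $r\to r^*$. The instability of the origin is handled exactly as you do, via the Jacobian and \cite{monzon2006local}.

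What each approach buys: the paper's cascade exploits that \eqref{eq: step1} is already stated in terms of $\Pi$, so the ``hard'' cross-term step you flag as the main obstacle evaporates---the projector $\Pi$ absorbs the global coupling in one line. It also stays in $\alpha\beta$ coordinates throughout, so there is no cut-locus issue and no need to argue that trajectories avoid $|\theta_k-\theta_k^*|=\pi$ (a point you gloss over). Your polar route, by contrast, is more geometric and gives an explicit annulus, but you would have to translate \eqref{eq: step1}---an inequality living on $(\mathrm{span}\,u_{dq}^*)^\perp$---into domination of mixed $(r,\theta)$ terms on the whole annulus, and you would need a separate argument for the radial direction along $u_{dq}^*$ where \eqref{eq: step1} says nothing. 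That is doable but is exactly the work the paper sidesteps by splitting convergence into ``to the ray'' and ``along the ray.''
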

 \begin{itemize}
   \item Show that $\mathcal{S}'_{uu}=\mathrm{span}\{u_{dq}^*\}, u_{dq}^*\in \mathcal{S}'_u$ is exponentially stable using $\mathcal{V}_1(u_{dq})$ for any trajectory of \eqref{eq: red-sys1}.
    \item Show that the origin is an unstable equilibrium with a region of attraction $\mathcal{S}_0$ of measure zero. See later Lemma \ref{lem: zero-leb}.
   \item  For all initial conditions  $u\in \mathcal{S}'_{uu}\setminus\{0\}$, show that the set $\{u_{dq}\in\real^{2n}\,\vert\,  r=r^*\}$ is asymptotically stable, based on LaSalle invariance principle.
 \end{itemize}
 This concludes asymptotic stability of $\mathcal{S}'_u=\mathcal{S}'_{uu}\cap \{u_{dq}\in\real^{2n}\,\vert\,  r=r^*\}$ for all trajectories of \eqref{eq: red-sys1}, except those with initial condition on $\mathcal{S}_0$. For an illustration of the proof steps, see Figure \ref{fig:proof-idea}.

  \item\underline {Step 2}: We now show the main Theorem \ref{thm: main-thm}, following singular perturbation ideas from Ch.11.5 of \cite{khalil2002nonlinear}: 
\begin{itemize}
\item By treating $u_{dq}\in\mathcal{S}'_u$ as a fixed parameter, show that $v^*_{dq}=L^{-1} u^*_{dq}$ is exponentially stable using $\mathcal{V}_2(y)$ as Lyapunov function with $y=v_{dq}-L^{-1}u_{dq}$ for any trajectory of the boundary-layer system \begin{align}
\label{eq: boud-layer2}
 \dot y= -L \,(y+ L^{-1} u_{dq})+  u_{dq}=- L \,y.
\end{align}
  
\end{itemize}
  \item Use Steps 1 and 2 to show asymptotic stability of $\mathcal{S}'$ using the Lyapunov function $\mathcal{W}(u_{dq},y)=(1-d)\, \mathcal{V}_1(u_{dq})+d\,\mathcal{V}_2(y)$, with $0<d<1$ with no particular assumptions on $u_{dq}$ and $y$.
 \end{itemize}

\paragraph{Step 1}
Our goal is to investigate the stability of the subspace $\mathcal{S}'_{uu}=\mathrm{span}\{u^*_{dq}\}$ with $u^*_{dq}\in\mathcal{S}'_u$. 
In the sequel, we take $h(u_{dq})=L^{-1} u_{dq}$ and consider the reduced system \eqref{eq: red-sys1}

We have the following lemma,
\begin{lemma}
\label{lem: zero-leb}
Consider the system in \eqref{eq: red-sys1}. The set
\begin{align}
\label{eq: Y-set}
\mathcal{S}_0=\{ u_0\in\real^{2n}: \lim_{t\to\infty}u_{dq}(t, u_0)=0\},
\end{align}
has zero Lesbegue measure and the origin is unstable.
\end{lemma}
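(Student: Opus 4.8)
The plan is to analyze the linearization of \eqref{eq: red-sys1} at the origin and invoke the standard result on unstable manifolds / measure-zero basins. First I would observe that, at $u_{dq}=0$, we have $r=0$ so the amplitude gain $\lambda_k(0,r^*_k)=\gamma_k(0/r^*_k-1)=-\gamma_k$; hence $\Lambda(0,r^*)=-\mathcal{I}\otimes(\gamma_k I_2)$, which is negative definite (it contributes a \emph{positive} term $+\gamma_k$ to the system matrix of \eqref{eq: red-sys1} after the sign flip). The remaining terms $K(\theta,\theta^*)$ and $\Pi_{dq}L^{-1}$ evaluated along the relevant directions are, respectively, skew-type and bounded; the key point is that near the origin the dominant contribution makes the origin a \emph{repeller} in the radial direction. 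Concretely, writing the $u_{dq}$-dynamics componentwise in polar form, $\dot r_k = -r_k\lambda_k(r_k,r^*_k) + (\text{terms of order }r) = \gamma_k r_k + o(r_k)$ near $r_k=0$, so each $r_k$ grows; more carefully one should bundle the full vector field and show the Jacobian $A_0 := -(\Lambda(0,r^*)+K(\theta^*,\theta^*)+\Pi_{dq}L^{-1})$ has all eigenvalues with strictly positive real part, or at least no eigenvalue on the imaginary axis with the unstable subspace nontrivial. Since Condition \ref{cond: stability}, in particular \eqref{eq: step1} with the choice relevant at the origin together with the smallness of $\alpha^*$ in \eqref{eq: alpha}, forces the symmetric part of $A_0$ to be positive definite (the $\gamma_{\max}$-term dominates $\Pi_{dq}L^{-1}$ by \eqref{eq: step1}, and the $\alpha_k$-contribution is skew plus an arbitrarily small perturbation), the origin is a hyperbolic unstable equilibrium.

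Given hyperbolicity with a nontrivial unstable subspace, instability of the origin is immediate from Lyapunov's first method (Chetaev / linearization instability theorem, e.g. Theorem 4.7 in \cite{K02}). For the measure-zero claim, I would invoke the stable manifold theorem: the set of initial conditions converging to a hyperbolic equilibrium is contained in its stable manifold $W^s(0)$, an embedded submanifold of dimension equal to $\dim E^s(A_0)$. If the symmetric part of $A_0$ is positive definite then $E^s(A_0)=\{0\}$ and $\mathcal{S}_0=\{0\}$, trivially of zero Lebesgue measure; even if a few eigenvalues had nonpositive real part, as long as $E^s$ is a proper subspace, $W^s(0)$ has dimension $< 2n$ and hence Lebesgue measure zero in $\real^{2n}$. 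This is the argument used in \cite{colombino2019global, gross2019effect} and I would cite it in that form.

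The main obstacle is verifying \emph{rigorously} that the full Jacobian $A_0$ — not just its radial projection — is sign-definite (or at least has no stable directions) under Condition \ref{cond: stability}. The subtlety is that $K(\theta^*,\theta^*)$ is block-skew-symmetric and contributes nothing to the symmetric part, but $\Pi_{dq}L^{-1}$ is \emph{not} symmetric and $\Pi_{dq}$ is only positive semidefinite (it is a projection with a one-dimensional kernel $\mathrm{span}\{v^*_{dq,k}\}$ per node), so one must check that $\Lambda(0,r^*) = -\gamma_{\max}$-type term genuinely dominates $\Pi_{dq}L^{-1}+(\Pi_{dq}L^{-1})^\top$ on \emph{all} of $\real^{2n}$, which is precisely the content one extracts from \eqref{eq: step1} after noting $\Pi \preceq \mathcal{I}$ and handling the gap between $\gamma_{\max}$ and the individual $\gamma_k$. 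A secondary point to be careful about is that \eqref{eq: red-sys1} is only $2\pi$-periodic / defined with $\theta_k$ entering affinely, so one should confirm the origin really is an isolated equilibrium of \eqref{eq: red-sys1} (it is, since $A_0$ nonsingular) before speaking of its basin; once that is in hand the stable-manifold argument goes through verbatim.
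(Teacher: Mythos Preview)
Your overall strategy---linearize at the origin, exhibit an unstable eigenvalue, then invoke a stable-manifold/measure-zero result---is exactly the paper's. The gap is in how you try to certify the unstable eigenvalue.

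You argue that Condition~\ref{cond: stability}, specifically \eqref{eq: step1}, makes the symmetric part of the Jacobian $A_0=\mathrm{diag}(\gamma_k I_2+\alpha_k\theta_k^*J_2)-\Pi_{dq}L^{-1}$ positive definite, because ``the $\gamma_{\max}$-term dominates $\Pi_{dq}L^{-1}$''. That reading is backwards. Inequality \eqref{eq: step1} says
\[
\Pi(\gamma_{\max}\mathcal{I}-\Pi_{dq}L^{-1})+(\gamma_{\max}\mathcal{I}-\Pi_{dq}L^{-1})^\top\Pi<-\xi_1\Pi,
\]
so for any $w$ in the range of $\Pi$ one gets $w^\top\big(\gamma_{\max}\mathcal{I}-\Pi_{dq}L^{-1}+(\gamma_{\max}\mathcal{I}-\Pi_{dq}L^{-1})^\top\big)w<-\xi_1\|w\|^2$. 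In other words, on a $(2n-1)$-dimensional subspace the symmetric part of $\gamma_{\max}\mathcal{I}-\Pi_{dq}L^{-1}$ is \emph{negative} definite, and hence so is that of $A_0$ (up to the small $\alpha_k$ skew perturbation). Your claim that $\mathrm{Sym}(A_0)>0$ on all of $\real^{2n}$ therefore fails precisely under the stability condition you invoke; the origin is not a pure repeller here, and $E^s(A_0)$ need not be trivial.

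The paper sidesteps this entirely and does \emph{not} use Condition~\ref{cond: stability} for this lemma. It simply tests the Jacobian against the specific vector $v^*_{dq}$, which lies in the kernel of $\Pi_{dq}$: then $v^{*\top}_{dq}\Pi_{dq}L^{-1}v^*_{dq}=0$ and the quadratic form reduces to $v^{*\top}_{dq}\,\mathrm{diag}(\gamma_k I_2+\alpha_k\theta_k^*J_2)\,v^*_{dq}>0$, since each block has positive-definite symmetric part $\gamma_k I_2$. This already yields eigenvalues of $A_0$ with positive real part, hence instability by linearization, and the measure-zero basin by Prop.~11 of \cite{monzon2006local} (the same stable-manifold-type result you cite). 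So the missing idea in your attempt is to exploit $\ker\Pi_{dq}=\mathrm{span}\{v^*_{dq}\}$ rather than trying to bound $\Pi_{dq}L^{-1}$ globally.
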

\begin{proof}
We inspect the Jacobian of \eqref{eq: red-sys1} given by,
\begin{align}
\label{eq: Jacobian}
   \left.\frac{\partial f}{\partial u_{dq}}\right.\vert_{u_{dq}=0}=\lim_{u_{dq}\to 0}\left(-\Lambda(r,r^*)-K(\theta,\theta^*)- \Pi_{dq}\, L^{-1}\right).
 \end{align} 
Note that the first two terms read, for all $k=1,\dots,n$ as,
\begin{align*}
&\lim_{u_{dq,k}\to 0}-\lambda_k(r_k,r^*_{k})I_2-\omega_k(\theta_k,\theta^*_k) J_2\\
&=\lim_{u_{dq,k}\to 0}-\gamma_k\left(\frac{\sqrt{u_{dq,k}^\top u_{dq,k}}}{r^*_{k}}-1\right)I_2-\alpha_k({\theta_k}-{\theta^*_k})J_2  \\
&=\gamma_k\cdot I_2+\alpha_k \theta_k^* J_2=:A_k.
\end{align*}
The symmetric part of $A_k$ is positive definite. Hence, for any vector $v_{dq}^{*}$,  $v_{dq}^{*\top}(\mathrm{diag}(A_1, \dots A_n)- \Pi_{dq} L^{-1})v_{dq}^{*}=v_{dq}^{*\top}\mathrm{diag}(A_1, \dots A_n)v_{dq}^{*}>0$. 
This shows that \eqref{eq: Jacobian} has at least two eigenvalues with positive real part and the origin is unstable.
By applying Prop. 11 in \cite{monzon2006local}, we conclude that the set $\mathcal{S}_0$ given in \eqref{eq: Y-set} has measure zero.
\end{proof}
\begin{figure}[h!]
    \centering
    \includegraphics[scale=0.35, trim=0cm 4cm 0 4cm, clip]{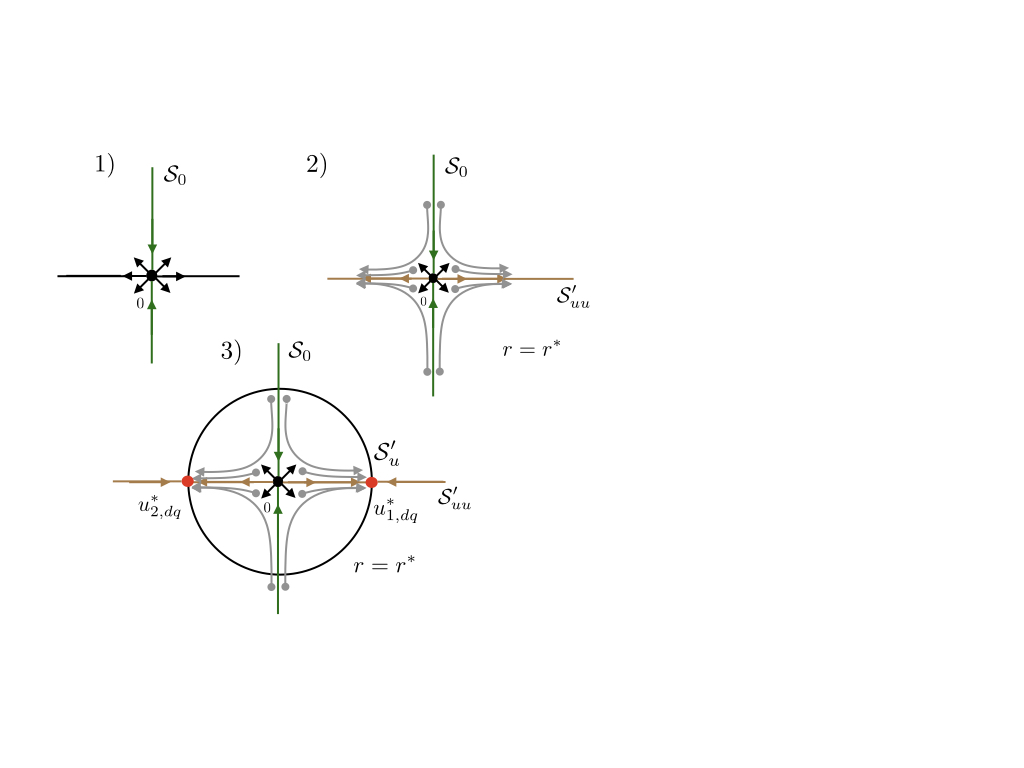}
    \caption{Sketch of Step 1 in the proof. The set $\mathcal{S}_0$ is denoted by the vertical green line. The set $\mathcal{S}'_{uu}$ is represented by the horizontal brown line and the set $\mathcal{S}_u'$ is denoted by the red dots.}
    \label{fig:proof-idea}
\end{figure}

We use as Lyapunov function an projector onto the orthogonal complement of $ u^{*}_{dq} $ as follows,
\begin{align}
{\mathcal{V}}_1(u_{dq})= u^\top_{dq}\, \Pi\, u_{dq}, ~~ \Pi= \mathcal{I}-\frac{u^*_{dq} u^{*\top}_{dq} }{u^{*\top}_{dq} u^*_{dq}}.
\end{align}
For this, we calculate,
\begin{subequations}
\begin{align*}
&\dot{\mathcal{V}}_1(u_{dq})= -2 u^\top_{dq}\,\Pi\left((\Lambda(r,r^*) + K(\theta, \theta^*) + \Pi_{dq} L^{-1} \right)u_{dq} ,\\
&\leq - 2 \, u^\top_{dq}\,\Pi\, \Pi_{dq} \, L^{-1} \,u_{dq}+ 2\, u^\top_{dq}\Pi\, \Gamma \, u_{dq}\\
&\leq 2 \, u^\top_{dq}\, \Pi\, (\Gamma-\Pi_{dq}\, L^{-1}) \, u_{dq}\leq 2 \, u^\top_{dq}\, \Pi\, (\gamma_{\max}\mathcal{I}-\Pi_{dq}\, L^{-1}) \, u_{dq},
\end{align*}
\end{subequations}
where $\Gamma=\mathrm{diag}\{\gamma_1 I_2, \dots, \gamma_n I_2 \}, \gamma_{\max}=\max\{\gamma_1, \dots,\gamma_n\}$, and under Condition \eqref{eq: alpha}, it holds that,
\begin{align*}
&-(\Pi\, (\Lambda(r, r^*)+K(\theta, \theta^*))+(\Lambda(r, r^*)+K(\theta, \theta^*))^\top\Pi)\\
&\leq \Pi\; \Gamma+\Gamma^\top \Pi. \end{align*}
The details of the calculations are in the Appendix \ref{sec: appendix}. 

{By choosing $\gamma_{\max}$, so that \eqref{eq: step1} is satisfied, we have that,
$$u_{dq}^\top \Pi (\gamma_{\max}\mathcal{I}-\Pi_{dq}\, L^{-1}) u_{dq}\leq - u_{dq}^\top \Pi \xi_1  u_{dq}.$$}
Lie derivative is hence negative definite with respect to $\mathcal{S}'_{uu}$. This shows that $\mathcal{S}'_{uu}$ is exponentially stable.

Given the amplitude and angle functions \eqref{eq: amp-fcn}, \eqref{eq: angl-fcn}, our remaining goal in this step is to show that $u_{dq}$ converges to the steady state values of the amplitudes on $\mathcal{S}'_u$. For this, we introduce the following Lyapunov function, $$\mathcal{Z}(r)= \frac{1}{2}\sum_{k=1}^n (1-\frac{r_k}{r^*_{k}})^2,$$ 
whose Lie derivative is given by, 
\begin{align*}
 \dot{\mathcal{Z}}(r)&= \sum_{k=1}^n  (1-\frac{r_k}{r^*_{k}}) \;\dot r_k\\
 &= \sum_{k=1}^n  -\gamma_k (1-\frac{r_k}{r^*_{k}})^2r_k -u_{dq}^\top C \,  \, \Pi_{dq} L^{-1} u_{dq},
\end{align*}
where $C=\mathrm{diag}\{\frac{1}{r_{1}}(1-\frac{r_1}{r^*_{1}}),\dots,\frac{1}{r_{n}}(1-\frac{r_n}{r^*_{n}}) \}$.
From the previous step, we know that $\mathcal{S}'_{uu}$ is exponentially stable. This implies that $L^{-1} u_{dq}\to L^{-1} \mathrm{span}\{u^*_{dq}\}=\mathrm{span}\{v^*_{dq}\}$ and hence $\lim_{t\to\infty}\norm{\Pi_{dq} L^{-1} u_{dq}(t)}\to 0$. From $\norm{\Pi_{dq} L^{-1} u_{dq}(t)}\leq \norm{\Pi_{dq} L^{-1} u_{dq}(0)}$ and by setting $\norm{\Pi_{dq} L^{-1} u_{dq}(0)}=0$, we have,
\begin{align*}
\dot{\mathcal{Z}}(r)\leq -\sum_{k=1}^n \gamma_k {(1-\frac{r_{k}}{r^*_k})^2}\,r_k. 
\end{align*}
Note that by $u_{dq}\neq0$, it holds that $r_k>0$. Since the sub-level sets of $\mathcal{Z}(r)$ are compact, the trajectories of \eqref{eq: red-sys} starting on $\Omega_{c_0}(u_{dq})=\{\mathcal{Z}(r)\leq c_0\}$ are bounded. We invoke LaSalle invariance principle to deduce asymptotic stability of the set $\mathcal{S}'_u$ for all trajectories starting from $\mathcal{S}'_{uu}$.

Hereby, we infer almost global stability of the desired steady state set $\mathcal{S}'_u=\{u_{dq}\in\real^{2n}\vert r=r^*\} \cap \mathcal{S}'_{uu} $ and conclude the proof of Proposition \ref{prop: al-glo-stab}.

\paragraph{Step 2}
In this step, consider the boundary-layer system given by \eqref{eq: boud-layer2}. Since $-L$ is Hurwitz, the origin is exponentially stable for all trajectories of the boundary-layer system \eqref{eq: boud-layer2} with the Lyapunov function $\mathcal{V}_2=\frac{1}{2} y^\top y$. 

Next, we consider the convex combination $$\mathcal{W}(u_{dq}, v_{dq})=(1-d){\mathcal{V}}(u_{dq})+d \mathcal{V}_2(y).$$
By taking the derivative, 
\begin{align*}
& \dot{\mathcal{W}}(u_{dq}, v_{dq})= (1-d) \dot{\mathcal{V}}(u_{dq})+ 2\, d \, y^\top \dot y \\
&=2\, (1-d)\, u^\top_{dq} \Pi (-(\Lambda(r,r^*) + K(\theta, \theta^*))u_{dq}\\ &-\Pi_{dq}\,(y+L^{-1}u_{dq}))) + 2 \frac{d}{\epsilon}\, y^\top (- L \,y)\\
&- 2\, d y^\top L^{-\top}(-\left(\Lambda(r,r^*) + K(\theta, \theta^*))u_{dq}- \Pi_{dq}\,(y+L^{-1}u_{dq}) \right).
\end{align*}

For the mixed terms in $u_{dq}$ and $y$, we calculate 
{
\begin{align*}
 &y^{\top}L^{-\top}[(\Lambda(r,r^*)+K(\theta,\theta^*)) u_{dq}+ \Pi_{dq}(y+L^{-1} u_{dq})] \leq\\  &\norm{y}(\norm{L^{-\top}\Pi_{dq}\, L^{-1}}+\gamma_{\max}\norm{L^{-\top}}) \norm{u_{dq}}_{\mathcal{S}'_u}+\norm{y}^2\norm{L^{-\top}\Pi_{dq}}\\
 &\leq \norm{y}(\norm{L^{-1}}^2+\gamma_{\max}\norm{L^{-1}}) \norm{u_{dq}}_{\mathcal{S}'_u}+\norm{y}^2\norm{L^{-1}},
 \end{align*}
 as well as the inequality $-u_{dq}^\top \Pi\, \Pi_{dq}\, y \leq \norm{u_{dq}}_{\mathcal{S}'_u} \norm{y}$.
} By condition \eqref{eq: alpha}, we only use $\norm{L^{-\top}\Lambda}\leq \norm{L^{-\top}\gamma_{\max}}$. This can be seen from $\norm{L^{-\top}\Lambda}=\norm{\Lambda\, L^{-1}}$ and $L^{-\top}\Lambda\, \Lambda\, L^{-1}\leq \gamma_{\max}^2 L^{-\top} L^{-1}$, where $L^{-1}(\Lambda\, \Lambda^\top -\gamma^2_{\max} \mathcal{I})L^{-\top}\leq 0$. Thus,
\begin{align*}
&\dot{\mathcal{W}}(u_{dq}, y)\leq\\
&\begin{bmatrix}
\norm{u_{dq}}_{\mathcal{S}'_u} \\ \norm{y} 
\end{bmatrix}^\top\left[\begin{smallmatrix} 
-2(1-d) \xi_1 &\; -\frac{1}{2}(1-d)\beta_1-\frac{1}{2}d\beta_2\\ 
-\frac{1}{2}(1-d)\beta_1-\frac{1}{2}d\beta_2 & d (\frac{\xi_2}\epsilon-\zeta)
\end{smallmatrix}\right]\begin{bmatrix}
\norm{u_{dq}}_{\mathcal{S}'_u}  \\ \norm{y} 
\end{bmatrix}.
\end{align*}
By choosing $\epsilon$  in \eqref{eq: step2} as explained in Ch.11.5 in \cite{khalil2002nonlinear}, we note that the Lie derivative of $\mathcal{W}$ is negative definite with respect to $\mathcal{S}'$.

The above arguments show that $\mathcal{S}'$ is almost globally asymptotically stable for all trajectories of \eqref{eq: red-sys} and hence concludes the proof of Theorem \ref{thm: main-thm}.

\begin{remark}
We make the following remarks in order: 
\begin{enumerate}
\item Note that the steady-state values of the voltage capacitor $v_c^*$ are not necessarily required for the operation of the ($\lambda-\omega$) VOC. These can be readily obtained from the relationship $ v_{dq}^{*}= L^{-1}\,u_{dq}^*$, where $L=Z_G+\mathbf{B}\, Z_O^{-1}\, \mathbf{B}^\top$ and we can re-write the projector as $\Pi_{dq}=\mathcal{I}-\frac{L^{-1} u^{*}_{dq}u^{*\top}_{dq}L^{-\top}}{u^{*\top}_{dq}L^{-\top} L^{-1}u^*_{dq}}$. Hence, the knowledge of only the desired current $u_{dq}^*$ and the network parameters suffices to operate the ($\lambda-\omega$) VOC.
\item Our stability analysis implies that, when a frequency function $\omega(\theta, \theta^*)$ is chosen as in \eqref{eq: angl-fcn}, the control gain $\alpha_k$ needs to be chosen to be sufficiently small as in Condition \eqref{eq: alpha}. 
Example values are given later in Section \ref{sec: sims}. Note that other variants of the frequency function can be selected, involving functions with feedback from available measurements either of the DC or AC sides.
\item The higher is the value of the line resistor $R_O$, the better  condition \eqref{eq: tau2} is satisfied. This corresponds to a small value of line inductance $L_O$. Typical values of the line inductance and line capacitance are around $L_O=7.66\cdot 10^{-6} \mathrm{H},\,C_O = 1.83 \cdot 10^{-8}\mathrm{F}, C=10^{-5}\mathrm{F}$ and their time constants correspond to $5\cdot 10^{-5}$ and $10^{-6}$, respectively, which shows a difference of at least one order of magnitude between them. This justifies Assumption \ref{ass: ass1} in practical scenarios.
\item Condition \eqref{eq: step1} and  \eqref{eq: step2} depend on the steady states of the prescribed values of the input current/capacitor voltage and in particular on the desired voltage angles. They also depend on the network parameters, encoded in the impedance matrix $L$.
\item Condition \eqref{eq: step2} is understood as a condition on the converter parameters. In particular, it verifies whether the resistive load with conductance $G>G_{\min}$ satisfies \eqref{eq: step2}, where $G_{\min}$ is the minimal acceptable load for which \eqref{eq: step2} is fulfilled.
\end{enumerate}
\end{remark}

\section{Simulations}
\label{sec: sims}
{Our simulation setup consists of identical three-phase DC/AC converter model in closed-loop with the ($\lambda-\omega$) VOC as depicted in Figure \ref{fig:setup} and connected via  transmission lines according to $\Pi$ model. The parameter values can be found in Table \ref{table_example}. 
\begin{table}[h!]
	\caption{Parameter values of the three DC/AC converters and the lines (in S.I.).}
	\begin{center}
		\begin{tabular}{|l||l|l|}
			\hline
			& $C_i,\; i=\{1,2,3\}$ & RL Lines\\
			\hline\hline
			$L$ & $5\cdot10^{-4}$ &-- \\
			$C$ & $10^{-3}$ &-- \\
			$G$ & 0.5 & -- \\ 
			$R_{}$ & 0.1 &-- \\
			$C_O$ &--  &$10^{-8}$ \\
			$R_{O}$ &--  & $0.2$ \\
			$L_{O}$ & -- & $5\cdot 10^{-5}$\\
			$\gamma_{1, 2,3}$ & 0.1 &-- \\
			$\alpha_{1, 2,3}$ & 0.03 &-- \\
			\hline
		\end{tabular}
	\end{center}
	\label{table_example}
\end{table}

We operate the converter initially at the desired steady state with currents and voltages values given by $u_k^*=r_k^*[\cos(\theta_k^*) \sin(\theta_k^*)]$ with $r_k^*=20$ and $\theta_k^*=2\pi 50 t+1.1780$ with $k=1,2,3$ and corresponding output voltage of $v_{c,k}^*=r_{c,k}^*\,[\cos(\theta_{c,k}^*) \sin(\theta_{c,k}^*)]$ with $r_{c,k}^*=175$ and $\theta_c^*=2\pi 50 t-2.463$. 
We select all control gains and system parameters satisfying the sufficient stability conditions \eqref{eq: cdts}.
We demonstrate the droop behavior of ($\lambda-\omega$) VOC as discussed in Proposition \ref{prop: droop} via an increase in power demand at converter 1. As shown in Figure \ref{fig: fig1}, notice that an increase in the conductance value (modeling an increase in active power consumption) causes a drop in the amplitude of the output capacitor voltage at the affected converter (blue line). On the other hand, an increase in the voltage amplitude of the two other converters (red and yellow lines) compensate for the total increase in power demand. This can also be seen from the power profile of the three converters in Figure \ref{fig: fig2}.

Finally, we vary the initial condition at one of the capacitor voltages $v_{c1}(0)$ and plot the electromagnetic transients of the output capacitor voltage from different initial conditions. Even though our analysis considers a reduced-order model, the trajectories of \eqref{eq: pow-sys-dq} converge to their desired amplitude and phase angles, as shown in Figure \ref{fig: fig3}.

\begin{figure}
    \centering
    \includegraphics[scale=0.48]{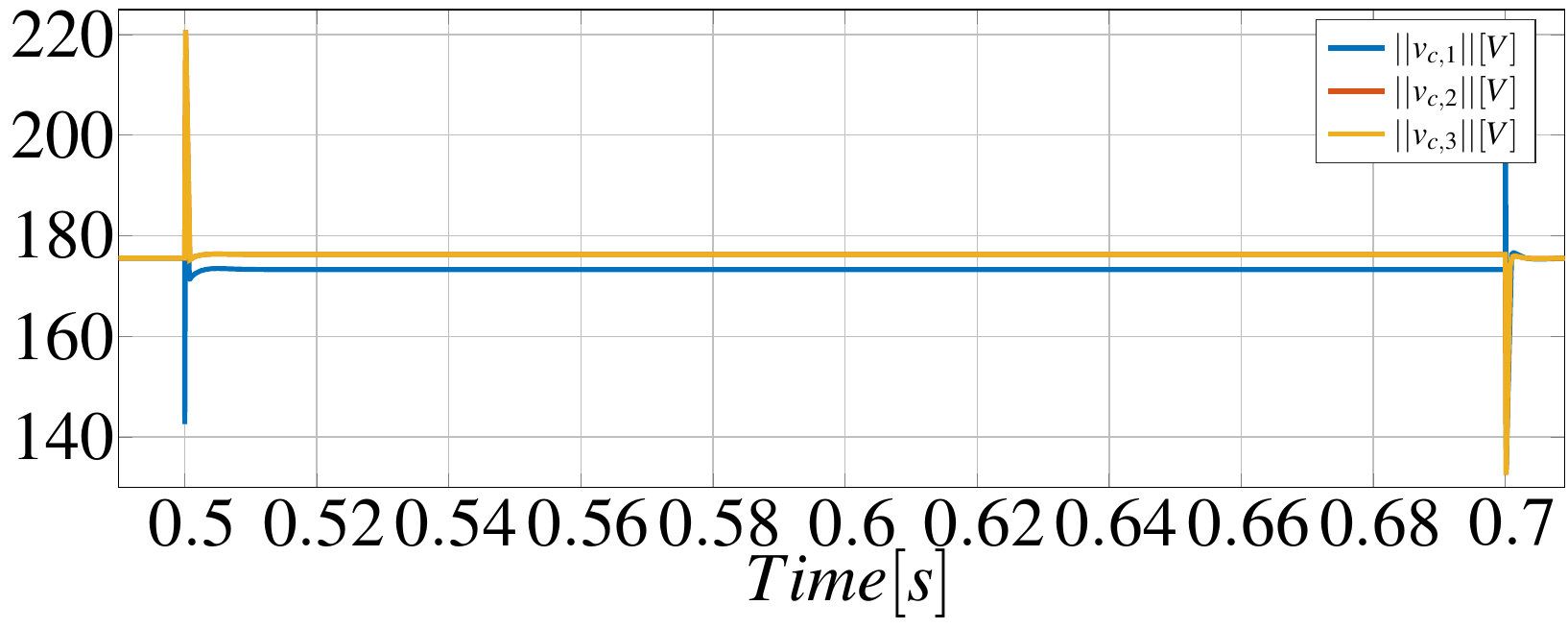}
    \caption{Droop behavior in the amplitudes of the output voltages, at converters 1,2 and 3, as converters 1 experiences an increase in power demand. These correspond to the power profiles in Figure \ref{fig: fig2} .}
    \label{fig: fig1}
\end{figure}

\begin{figure}
    \centering
    \includegraphics[scale=0.5]{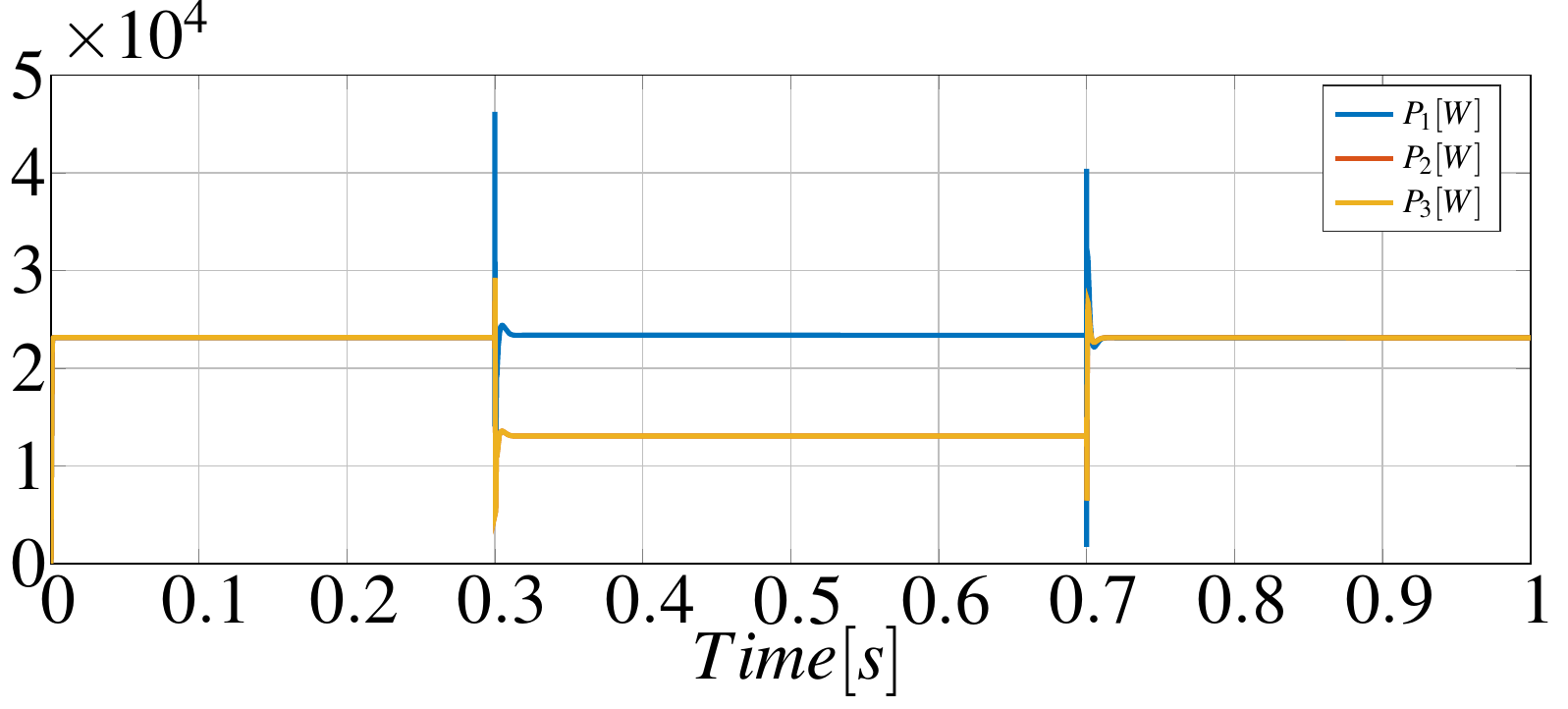}
    \caption{Active power output of the converters 1, 2 and 3 following a step in the load conductance $G$ of converter 1. Converters 2 and 3 provide power support. }
    \label{fig: fig2}
\end{figure}

\begin{figure}[h!]
    \centering
    \includegraphics[scale=0.5]{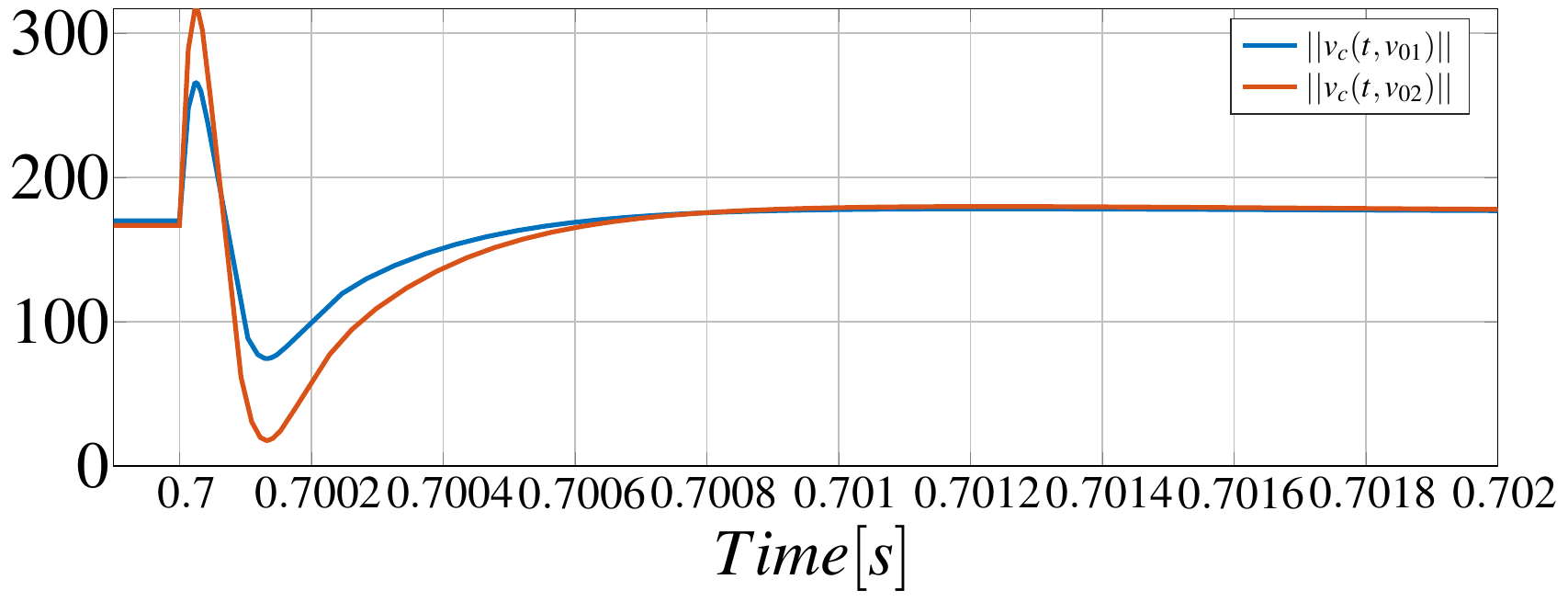}
    \caption{Electromagnetic transients of the output voltage magnitude of converter 1 with two different initial conditions (after two different disturbances).}
    \label{fig: fig3}
\end{figure}
}

\section{Conclusion}
We proposed a grid-forming control strategy inspired by wave phenomena in biological systems and building upon virtual oscillator control, while incrementing their dynamics with an adaptive frequency function. Our controller has droop behavior both in the amplitude and frequency. We proved almost global asymptotic stability of the desired set for a reduced-order model of converter network in closed-loop with ($\lambda-\omega$) VOC for a specified range of network parameters and control gains. We validated our results on a simulative three converter example. In our future work, we will consider a more detailed model of DC/AC converters and in particular include the DC-side dynamics in our analysis.
\section*{APPENDIX}
\label{sec: appendix}
\begin{proof}
Let $M_1(r)=\mathrm{diag}\{\gamma_1\,\left[\begin{smallmatrix}
\frac{r_1}{r^*_1} & 0 \\ 0 & \frac{r_1}{r^*_1}
\end{smallmatrix}\right], \dots, \gamma_n\,\left[\begin{smallmatrix}
\frac{r_n}{r^*_n} & 0 \\ 0 & \frac{r_n}{r^*_n}
\end{smallmatrix}\right]\}$ and $M_2(\theta)=\!\mathrm{diag}\{\!\alpha_1\!\left[\begin{smallmatrix}
0 & -(\theta_1-\theta_1^*) \\ (\theta_1-\theta_1^*)  & 0
\end{smallmatrix}\right]\!\!,\dots, \!\alpha_n\!\left[\begin{smallmatrix}
0 & -(\theta_n-\theta_n^*) \\ (\theta_n-\theta_n^*)  & 0
\end{smallmatrix}\right]\}$. We calculate $ $
\begin{align*}
&(\mathcal{I}-\frac{u_{dq}^*u_{dq}^{*\top}}{u_{dq}^{*\top} u_{dq}^*}) \left(M_1(r)+M_2(\theta)\right)\\
&+ \left(M_1(r) +M_2(\theta)\right)^\top(\mathcal{I}-\frac{u_{dq}^*u_{dq}^{*\top}}{u_{dq}^{*\top} u_{dq}^*})=\\
&\underbrace{M_1(r)+M_1^\top(r)-(\frac{u_{dq}^*u_{dq}^{*\top}}{u_{dq}^{*\top} u_{dq}^*} M_1(r)+M^\top_1(r)\frac{u_{dq}^*u_{dq}^{*\top}}{u_{dq}^{*\top} u_{dq}^*}}_{D_{1,k}(\gamma_k)})\\
&+\underbrace{M_2(\theta)+M_2^\top(\theta)}_{0}
-\underbrace{(\frac{u_{dq}^*u_{dq}^{*\top}}{u_{dq}^{*\top} u_{dq}^*} M_2(\theta)+M^\top_2(\theta)\frac{u_{dq}^*u_{dq}^{*\top}}{u_{dq}^{*\top} u_{dq}^*}}_{D_{2,k}}).
\end{align*} 
We calculate $D_{1,k}(\gamma_k)$ as follows,
\begin{align*}
\!\!\!\!\begin{bmatrix}
 2 \gamma_k \frac{r_k}{r^*_k}-\frac{2r_k r_k^*}{\norm{u_{dq,k}^*}^2}(\gamma_k s^2(\theta_k^*) r_k & -\frac{2r_k r_k^*}{\norm{u_{dq,k}^*}^2}\gamma_k c(\theta_k^*) s(\theta_k^*) \\-\frac{2r_k r_k^*\gamma_k}{\norm{u^*}^2} c(\theta_k^*) s(\theta_k^*) & 2 \gamma_k \frac{r_k}{r^*_k} -\frac{2r_k r_k^*}{\norm{u_{dq}^*}^2}(\gamma_k c^2(\theta_k^*) r_k 
\end{bmatrix}
\end{align*}
\begin{align*}
D_{2,k}&=(\theta_k-\theta_k^*) \alpha_k r_k^*\begin{bmatrix}
 2 s(\theta_k^*) c(\theta_k^*) & (c(\theta_k^*)-s(\theta_k^*))\\ 
 (c(\theta_k^*)-s(\theta_k^*)) & -2 s(\theta_k^*) c(\theta_k^*)
\end{bmatrix}
\end{align*}
where $s(\cdot)=\sin(\cdot),\, c(\cdot)=\cos(\cdot)$ and $u_{dq,k}^*=r_k^*[c(\theta_k^*) s(\theta_k^*)]^\top$. 
This amounts to checking $\mathrm{diag}\{D_{1,1}(\gamma_1)-D_{2,1},\dots, D_{1,n}(\gamma_n)-D_{2,n}\}$ for positive definiteness. Note that $D_1(\gamma_k)$ is positive definite (by checking the trace and determinant). Hence, for sufficiently small $\alpha_k$, $D_1(\gamma_k)-D_2$ is positive semi-definite. Since $D_1(\gamma_{\max})-D_{1,k}(\gamma_k)$ is positive definite with $\gamma_{\max}=\max(\gamma_1,\dots \gamma_n)$, it follows that $\Pi\, \Gamma +\Gamma\,\Pi\leq 2 \gamma_{\max}\Pi$.

\end{proof}




\section*{ACKNOWLEDGMENT}
Taouba Jouini would like to thank Prof. Johannes Schiffer for the insightful comments and constructive discussions.
\bibliographystyle{ieeetran} 
\bibliography{root}

\end{document}